\documentclass{amsart}

\usepackage{amsthm, amsfonts, amssymb, amsmath, latexsym, enumerate, times}
\usepackage[latin1]{inputenc}
\usepackage{xypic}
\usepackage{mathrsfs}
\usepackage{mathtools}
\usepackage{letltxmacro}
\usepackage{comment}
\LetLtxMacro\orgvdots\vdots
\LetLtxMacro\orgddots\ddots

\makeatletter
\DeclareRobustCommand\vdots{%
	\mathpalette\@vdots{}
}
\newcommand*{\@vdots}[2]{%
	\sbox0{$#1\cdotp\cdotp\cdotp\m@th$}%
	\sbox2{$#1.\m@th$}%
	\vbox{%
		\dimen@=\wd0 %
		\advance\dimen@ -3\ht2 %
		\kern.5\dimen@
		\dimen@=\wd2 %
		\advance\dimen@ -\ht2 %
		\dimen2=\wd0 %
		\advance\dimen2 -\dimen@
		\vbox to \dimen2{%
			\offinterlineskip
			\copy2 \vfill\copy2 \vfill\copy2 %
		}%
	}%
}
\DeclareRobustCommand\ddots{%
	\mathinner{%
		\mathpalette\@ddots{}%
		\mkern\thinmuskip
	}%
}
\newcommand*{\@ddots}[2]{%
	\sbox0{$#1\cdotp\cdotp\cdotp\m@th$}%
	\sbox2{$#1.\m@th$}%
	\vbox{%
		\dimen@=\wd0 %
		\advance\dimen@ -3\ht2 %
		\kern.5\dimen@
		\dimen@=\wd2 %
		\advance\dimen@ -\ht2 %
		\dimen2=\wd0 %
		\advance\dimen2 -\dimen@
		\vbox to \dimen2{%
			\offinterlineskip
			\hbox{$#1\mathpunct{.}\m@th$}%
			\vfill
			\hbox{$#1\mathpunct{\kern\wd2}\mathpunct{.}\m@th$}%
			\vfill
			\hbox{$#1\mathpunct{\kern\wd2}\mathpunct{\kern\wd2}\mathpunct{.}\m@th$}%
		}%
	}%
}
\makeatother

\usepackage{array}
\usepackage{comment}
\usepackage{paralist}
\usepackage{xcolor}

\newtheorem{theorem}{Theorem}
\newtheorem{lemma}[theorem]{Lemma}

\newtheorem{corollary}[theorem]{Corollary}
\newtheorem{proposition}[theorem]{Proposition}

\theoremstyle{definition}

\newtheorem{remark}[theorem]{Remark}

\newcommand{\bbP}{{\mathbb P}}

\def\geq{\geqslant}
\def\leq{\leqslant}

\begin{document}
 
\title[On Calabi--Yau varieties  that are linear sections of Grassmannians  of lines]{On  Calabi--Yau varieties that are linear sections of  Grassmannians of lines}

\author[M. Bolognesi]{Michele Bolognesi}
\address{Univ. Grenoble Alpes, CNRS, IF, 38000 Grenoble, France}
\email{michele.bolognesi@univ-grenoble-alpes.fr}
\author[C. Ciliberto]{Ciro Ciliberto}
\address{Dipartimento di Matematica, Universit\`a di Roma Tor Vergata}
\email{cilibert@axp.mat.uniroma2.it}
\author[A. Verra]{Alessandro Verra}
\address{Dipartimento di Matematica e Fisica, Universit\`a Roma Tre.}
\email{sandro.verra@gmail.com} 

 \thanks{ C.C.is  member of INdAM-GNSAGA}

\subjclass{Primary 14E05, 14J32 , 14J40, 14M12, 14M15, 14N05}
 
\keywords{Grassmannians of lines, line complexes, Calabi--Yau varieties, birational maps.}

\maketitle

\tableofcontents

\begin{abstract} In this paper we consider the  $n-3$ dimensional linear section $X_\Pi$ of the Grassmannian $\mathbb G(1,n)$ of lines in $\bbP^n$ with a general linear space $\Pi$ of codimension $n+1$, with $n=2k+1$ odd and $n\geq 5$, that is a  Calabi--Yau variety.  Related to $X_\Pi$ there is a degree $k+1$ Pfaffian hypersurface $\mathcal C_\Pi$ that is the intersection of the dual of $\mathbb G(1,n)$ with $\Pi^\perp$. We prove that $\mathcal C_\Pi$ is rational and we provide a birational parametrization of it. Moreover we study in some detail the congruence of lines of $\bbP^n$ given by the centres of the linear complexes parametrised by the points of $\mathcal C_\Pi$. 
\end{abstract}

\section*{Introduction} 

This paper can be considered as a natural continuation of \cite {BCV}. In that paper we studied a general intersection of the Grassmannian $\mathbb G(1,n)\subset \bbP^{\frac {n(n+1)}2-1}$ of lines in $\bbP^n$ with a general linear space $\Pi$ of codimension $n$. 
Here we consider instead  the  $n-3$ dimensional section $X_\Pi$ of the Grassmannian $\mathbb G(1,n)\subset \bbP^{\frac {n(n+1)}2-1}$ of lines in $\bbP^n$ with a general linear space $\Pi$ of codimension $n+1$, with $n=2k+1$ odd and $n\geq 5$,  that is a  Calabi--Yau variety. Then one can consider the linear system $\mathcal L_\Pi$ of dimension $n$ of hyperplanes containing $\Pi$, that can be identified with a general linear subspace of dimension $n$ in the dual projective space $(\bbP^{\frac {n(n+1)}2-1})^\vee$. There is then the hypersurface $\mathcal C_\Pi$ intersection  of $\mathcal L_\Pi$ with the dual  $\mathbb G(1,n)^\vee$  of the Grassmannian  $\mathbb G(1,n)$. The hypersurface $\mathcal C_\Pi$ is a degree $k+1$ Pfaffian hypersurface in $\mathcal L_\Pi\cong \bbP^n$. We study in some detail the congruence $\mathcal P_\Pi$ (of dimension $n-1$) of  lines of $\bbP^n$ given by the centres of the linear complexes parametrised by the points of $\mathcal C_\Pi$. We prove, in Section \ref {ssec:palat}, that the lines of this congruence  fill up the whole $\bbP^n$ and that through the general point of $\bbP^n$ passes a unique line of the congruence. From this it follows right away that  $\mathcal C_\Pi$ is rational (see Section \ref {ssec:rat}) and we provide in Section \ref {ssec:rat} a natural birational parametrization of it. Moreover we prove (in Sections \ref {ssec:focal} and \ref {ssec:basfoc}) that the focal locus of the congruence $\mathcal P_\Pi$ coincides with the locus $B_\Pi$,  of dimension $n-2$ and degree ${n\choose 2}-1$,  described by the lines in $X_\Pi$. In Section \ref {ssec:base}
 we show that $X_\Pi$ is birational to a general hyperplane section of $B_\Pi$, and actually such a hyperplane section is a birational projection of $X_\Pi$ from a suitable centre. In Section \ref {sec:lc} we collect a few useful preliminaries. The case $n=5$ has been treated already in \cite {BD}, from which we took inspiration. 
Our proofs rely on purely projective geometric techniques. In this paper we work over the complex field.

\section{Preliminaries}\label{sec:lc}

In this section we recall some well known facts on Grassmannians of lines. As a general reference, see \cite {BCV}. 

\subsection{Generalities} Let $\mathbb G(1,n)$ be the Grassmannian of lines in $\bbP^n=\bbP(V)$, with $V$ a vector space of dimension $n+1$, Pl\"ucker embedded in 
$$\bbP^{\frac {n(n+1)}2-1}\cong \bbP(\wedge^2V).$$
One has
$$
\dim (\mathbb G(1,n))=2(n-1),\,\,\ \deg(\mathbb G(1,n))=\frac {(2n-2)!}{n!(n-1)!}
$$
and the canonical bundle of $\mathbb G(1,n)$ is
$$
K_{\mathbb G(1,n)} \sim \mathcal O_{\mathbb G(1,n)}(-n-1).
$$

A hyperplane section of $\mathbb G(1,n)$ is called a \emph{line complex} of $\bbP^n$ and it is determined, up to a constant, by a non--zero element of $\wedge^2V^*$. It is well known that giving a line complex of $\bbP^n$ is equivalent to give a linear rational map $\omega: \bbP^n\dasharrow (\bbP^n)^\vee$, such that for any point $p\in \bbP^n$ where $\omega$ is defined, the hyperplane $\omega(p)$ contains $p$. The lines of the complex passing through a point $p\in \bbP^n$ where $\omega$ is defined are the lines through $p$ in the hyperplane $\omega(p)$. An indeterminacy point of $\omega$ is such that all lines passing through it are lines of the complex. The set of indeterminacy points is a linear subspace of $\bbP^n$, that is called the \emph{centre} of $\omega$. 
If the line complex is determined by $h\in  \wedge^2V^*$, then  the map  $\omega$ sends $p=[\bf p]\in \bbP^n$ (with $\bf p\in V\setminus \{\bf 0\}$) to the hyperplane where the linear form $h(\bf p, \bf x)$ in $\bf x$ vanishes. The centre of $\omega$ is the kernel of $h$. 

Let us introduce homogeneous coordinates $[{\bf x}]=[x_0,\ldots, x_n]$ in $\bbP^n$ and dual coordinates $[{\bf u}]=[u_0,\ldots, u_n]$ in $(\bbP^n)^\vee$. We may interpret the coordinates as column vectors. Then a  line complex $\omega: \bbP^n\dasharrow (\bbP^n)^\vee$ is defined by an equation of the type
\begin{equation}\label{eq:compo}
{\bf u}= {\bf A}\cdot {\bf x}
\end{equation}
where ${\bf A}=(a_{ij})_{0\leq i,j\leq n}$ is an antisymmetric matrix. The $\frac {n(n+1)}2$ entries $[a_{ij}]_{0\leq i<j\leq n}$ can be interpreted as dual coordinates in the Pl\"ucker space  $(\bbP^{\frac {n(n+1)}2-1})^\vee$. The centre of $\omega$ is the linear subspace of $\bbP^n$ with equation ${\bf A}\cdot {\bf x}={\bf 0}$. A line complex with equation \eqref {eq:compo} is said to be \emph{singular} if ${\bf A}$ has zero determinant, hence its centre is non--empty. 

We will assume from now on that $n=2k+1$ is odd, with $k\geq 2$. In this case the general line complex of $\bbP^n$ is non--singular (since the general antisymmetric matrix of even order has maximal rank), hence it determines a bijective morphism
$\omega: \bbP^n\longrightarrow (\bbP^n)^\vee$.  A general singular complex 
$\omega: \bbP^n\dasharrow (\bbP^n)^\vee$ has for centre a line $\ell_\omega$. This happens if and only if  the hyperplane $\pi_\omega$ corresponding to $\omega$ is tangent to $\mathbb G(1,n)$ at the point corresponding to $\ell_\omega$. Hence, the locus of singular complexes is the dual variety  $\mathbb G(1,n)^\vee$ of the Grassmannian $\mathbb G(1,n)$, and it is defined by the vanishing of the \emph{Pfaffian} of ${\bf A}$, i.e., ${\rm Pf}({\bf A})=0$ that is a hypersurface $\mathcal C_n$ of $(\bbP^{\frac {n(n+1)}2-1})^\vee$ of degree $k+1$. 

Notice that the singular locus of $\mathcal C_n$ has codimension 5 in $\mathcal C_n$ and a point of $\mathcal C_n$ is smooth if and only if its centre is a line. The degree of the singular locus of $\mathcal C_n$ is \\
$$\deg({\rm Sing}(\mathcal C_n))=\frac{1} {240} {{n+1}\choose 3}{{n+2}\choose 2}(n+3)$$ 
for $n\geq 5$ (see \cite[Prop. 12, (c)] {HT}).

\subsection{Linear sections and linear systems of line complexes}\label{sserc:ls}

Let us fix a linear subspace $\Pi$ of $\bbP^{\frac {n(n+1)}2-1}$, of codimension $c\geq 2$. 

The set $\mathcal L_\Pi$ of hyperplanes containing $\Pi$ cuts out on the Grassmannian a \emph{linear system of line complexes} of dimension 
$$
\dim (\mathcal L_\Pi)=c-1.$$

We will denote by  $X_\Pi$ the intersection of $\Pi$ with $\mathbb G(1,n)$, that is the set of lines that belong to all complexes of $\mathcal L_\Pi$. They are called the \emph{base lines} of $\mathcal L_\Pi$. Over $X_\Pi$ there is the universal family $\mathcal X_\Pi\longrightarrow X_\Pi$, and there is a natural projection morphism $\mathcal X_\Pi\longrightarrow \bbP^n$, whose image will be denoted by $B_{\Pi}$, and will be called the \emph{locus of base lines} of $\mathcal L_\Pi$. 

We will often assume that $\Pi$ is general of codimension $c$, with  $2n-3\geq c\geq 2$, in which case the intersection of $\Pi$ with $\mathbb G(1,n)$ is proper, namely  $\dim(X_\Pi)=2n-2-c$ and $X_\Pi$  is smooth and irreducible and, by adjunction, its canonical system is 
$$
K_{X_\Pi}=c-n-1.
$$
Accordingly, if $X_\Pi$ is non--empty, the expected dimension of $B_{\Pi}$ is
$$
{\rm expdim}(B_{\Pi})=\min\{n,2n-1-c\}. 
$$

By the generality of $\Pi$, $\mathcal L_\Pi$ is not contained in $\mathbb G(1,n)^\vee$. Hence the scheme theoretical intersection of 
$\mathcal L_\Pi$ with $\mathbb G(1,n)^\vee$ is a (Pfaffian) hypersurface $\mathcal C_\Pi$ of degree $k+1$ and dimension $c-2$ in $\mathcal L_\Pi$, that we will call the \emph{singular locus} of $\mathcal L_\Pi$. If $c\geq 3$, $\mathcal C_\Pi$ is  irreducible. Since $\mathcal C_n$ is singular in codimension 5, $\mathcal C_\Pi$ is smooth if and only if $c\leq 6$. 

Assume $c\geq 3$. By the generality of $\Pi$, the general element $\omega\in \mathcal C_\Pi$ is a smooth point of $\mathcal C_n$, hence it has for  centre a line $\ell_\omega$. This happens on the dense open subset $U_\Pi$ of  $\mathcal C_\Pi$ of smooth points of $\mathcal C_\Pi$, and there is a family $\phi_\Pi: \mathcal P_\Pi\longrightarrow U_\Pi$ such that for any $\omega\in U_\Pi$, the fibre of $\phi_{\Pi}(\omega)$ is the line $\ell_\omega$. We will call this family the \emph{family of centres} of $\mathcal L_\Pi$ or the \emph{Palatini family} of $\mathcal L_\Pi$ and a line in this family is called a \emph{Palatini line} of $\mathcal L_\Pi$. 

\begin{remark} \label{rem:ram} Consider the projection $p_\Pi: \mathbb G(1,n) \dasharrow \Pi^\perp\cong \bbP^{c-1}$, that is defined by the linear system $\mathcal L_\Pi$, and therefore its indeterminacy locus is $X_\Pi$. Then it is clear that a line $\ell$ corresponding to the point $[\ell]\in \mathbb G(1,n)$ is the centre of a complex in the intersection of $\mathcal L_\Pi$ with $\mathbb G(1,n)^\vee$  if and only if it is in the ramification locus of $p_\Pi$. So the closure of $U_\Pi$ can be identified with the ramification locus $R_\Pi$ of $p_\Pi$.
\end{remark} 

We recall from 	\cite[Lemma 1]{BCV} the following lemma:

\begin{lemma}\label{lem:gen} If $X_\Pi$ is proper and smooth, then $R_\Pi\cap X_\Pi=\emptyset$. In particular the Palatini family and the base lines family of $\mathcal L_\Pi$ have empty intersection. 
\end{lemma}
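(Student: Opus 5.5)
Suppose, for contradiction, that $[\ell]\in R_\Pi\cap X_\Pi$. The idea is that such a point forces $\Pi$ to be tangent to $\mathbb G(1,n)$ at $[\ell]$, against the smoothness of $X_\Pi$.

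Since $R_\Pi$ is the closure of $U_\Pi$ (Remark \ref{rem:ram}), $\ell$ is a limit of lines $\ell_{\omega_t}$ that are centres of complexes $\omega_t\in U_\Pi$. Each corresponding hyperplane $\pi_{\omega_t}$ contains $\Pi$ and is tangent to $\mathbb G(1,n)$ at $[\ell_{\omega_t}]$. I pass to a limit in the compact variety $\mathcal L_\Pi\cong \bbP^{c-1}$. This gives a hyperplane $\pi_0\supseteq \Pi$ containing the projective tangent space $T_{[\ell]}\mathbb G(1,n)$, because tangency is a closed condition on the conormal variety. Equivalently, I can argue via the ramification description directly: $[\ell]$ is in $R_\Pi$ exactly when $\Pi\cap T_{[\ell]}\mathbb G(1,n)$ is too large.

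The key step is a dimension count. Since $[\ell]\in X_\Pi=\Pi\cap\mathbb G(1,n)$, smoothness and properness of $X_\Pi$ at $[\ell]$ mean that $\Pi$ and $T_{[\ell]}\mathbb G(1,n)$ span the whole Plücker space. Indeed,
$$\dim T_{[\ell]}X_\Pi=2n-2-c=\dim(\Pi\cap T_{[\ell]}\mathbb G(1,n)),$$
and by Grassmann's formula this says that $\Pi$ and $T_{[\ell]}\mathbb G(1,n)$ span $\bbP^{\frac{n(n+1)}2-1}$. A hyperplane $\pi_0$ containing both cannot exist. This is the desired contradiction, so $R_\Pi\cap X_\Pi=\emptyset$.

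The second assertion follows at once. A Palatini line is $\ell_\omega$ with $\omega\in U_\Pi$, hence $[\ell_\omega]\in R_\Pi$. A base line is a point of $X_\Pi$.

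The main obstacle is the limit step. I have to make sure that points of $R_\Pi$ in the indeterminacy locus $X_\Pi$ still yield a tangent hyperplane in $\mathcal L_\Pi$. To handle this, I would define $R_\Pi$ as the closure in $\mathbb G(1,n)$ of the ramification locus and use the closedness of the incidence $\{([\ell],\pi): T_{[\ell]}\mathbb G(1,n)\subseteq\pi\}\subset\mathbb G(1,n)\times\mathcal L_\Pi$. Its projection to $\mathbb G(1,n)$ is closed, contains $U_\Pi$'s centres, and, by the dimension count above, misses $X_\Pi$.
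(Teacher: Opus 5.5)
Your proof is correct. The paper gives no proof of this lemma; it is quoted from \cite{BCV} (Lemma 1 there), so there is no in-paper argument to compare against.

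Your route is the natural one. A point $[\ell]\in R_\Pi\cap X_\Pi$ would produce a hyperplane of $\mathcal L_\Pi$ containing $T_{[\ell]}\mathbb G(1,n)$. On the other hand, transversality of $\Pi$ and $\mathbb G(1,n)$ at $[\ell]$, via Grassmann's formula, forces $\langle \Pi, T_{[\ell]}\mathbb G(1,n)\rangle=\bbP^{\frac{n(n+1)}2-1}$, so no such hyperplane exists.

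Your use of the closed incidence
$\{([\ell],H)\in\mathbb G(1,n)\times\mathcal L_\Pi : T_{[\ell]}\mathbb G(1,n)\subseteq H\}$
is exactly what handles points of $R_\Pi$ lying in the indeterminacy locus $X_\Pi$. Concretely, this incidence is $\{([u\wedge v],[h]) : h(u,\cdot)=h(v,\cdot)=0\}$, and its projection to $\mathbb G(1,n)$ is closed because $\mathcal L_\Pi$ is projective.

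Two small clean-ups:
\begin{enumerate}
\item Drop the ``equivalently, via the ramification description'' sentence. At points of $X_\Pi$ the map $p_\Pi$ is undefined, so ramification there has no direct meaning; the incidence argument alone carries the proof.
\item State explicitly that ``smooth'' is meant scheme-theoretically, as is implicit in the paper's use of adjunction for $K_{X_\Pi}$. This is what guarantees that the Zariski tangent space of $X_\Pi$ at $[\ell]$ is $\Pi\cap T_{[\ell]}\mathbb G(1,n)$, of dimension $2n-2-c$.
\end{enumerate}
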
 

\begin{remark} \label{rem:ram1} Note that $\dim(R_\Pi)=\dim(\mathcal C_\Pi)=c-2$ and, by Lemma \ref {lem:gen},  the restriction of the projection $p_\Pi: \mathbb G(1,n) \dasharrow  \bbP^{c-1}$ to $R_\Pi$ is finite. Hence the image of  $R_\Pi$ is a hypersurface in $\bbP^{c-1}$. It follows that a line $\ell$ corresponding to a general point $[\ell]\in R_\Pi$ is characterized by the property that the tangent space $T_{\mathbb G(1,n),\ell}$ to $\mathbb G(1,n)$ at the point $[\ell]$ and $\Pi$ span a hyperplane, hence they intersect in a linear space of dimension $2n-1-c$. \end{remark}

In particular, there  is  an obvious morphism $\mathcal P_\Pi\longrightarrow \bbP^n$ and the closure of the image of this morphism will be denoted by $P_\Pi$ and will be called the \emph{Palatini locus} of $\mathcal L_\Pi$. The expected dimension of $P_{\Pi}$ is
$$
{\rm expdim}(P_{\Pi})=\min\{n,c-1\}. 
$$

\subsection{The Calabi--Yau case}\label{ssec:CY}

In what follows we will assume $c=n+1$ and, as usual $n=2k+1$, with $k\geq 2$. Hence $X_\Pi$ will be a Calabi--Yau variety, of dimension $n-3$. The hypersurface $\mathcal C_\Pi$ is of dimension $c-2=n-1$ and has degree $k+1$. Notice that $\mathcal C_\Pi$ is smooth if and only if $n=5$, in which case it is a general Pfaffian cubic hypersurface in $\mathbb P^5$. 

\section{The Palatini family}\label{sec:palat}

In this section we will study the Palatini family in the Calabi--Yau case $c=n+1$, as usual under the generality hypothesis for the subspace $\Pi$ of codimension $n+1$ in  $\bbP^{\frac {n(n+1)}2-1}$. Since  $\mathcal C_\Pi$ has  dimension $n-1$, the expected dimension of the Palatini locus $P_\Pi$ is $n$, i.e., we expect that $P_\Pi=\bbP^n$. 

\subsection{The Palatini locus} \label{ssec:palat}

In this section we prove the following basic:

\begin{theorem}\label{thm:basic} In the Calabi--Yau case $c=n+1$ the Palatini locus $P_\Pi$ coincides with $\bbP^n$ and through the general point of $\bbP^n$ passes a unique Palatini line. 
\end{theorem}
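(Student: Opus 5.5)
The plan is to turn the statement into a rank computation for a linear map depending on the point $p\in\bbP^n$. Write $\Pi^\perp=\mathcal L_\Pi\cong\bbP^n$ as the projectivization of an $(n+1)$-dimensional space of antisymmetric matrices with basis ${\bf A}_0,\dots,{\bf A}_n$. A complex $\omega=[\sum_i\lambda_i{\bf A}_i]\in\mathcal L_\Pi$ has $p=[{\bf p}]$ in its centre if and only if $\sum_i\lambda_i{\bf A}_i{\bf p}={\bf 0}$. So I would introduce the $(n+1)\times(n+1)$ matrix $M({\bf p})$ whose columns are ${\bf A}_0{\bf p},\dots,{\bf A}_n{\bf p}$. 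The complexes of $\mathcal L_\Pi$ whose centre contains $p$ are then exactly $\bbP(\ker M({\bf p}))$. Antisymmetry gives ${\bf p}^T{\bf A}_i{\bf p}=0$, so every column of $M({\bf p})$ lies in the $n$-dimensional space ${\bf p}^\perp$. Hence ${\rm rk}\,M({\bf p})\leq n$ and $\ker M({\bf p})\neq 0$ for every $p$: through every point there passes the centre of at least one singular complex of $\mathcal L_\Pi$.

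Next I will show that for general $\Pi$ and general $p$ one has ${\rm rk}\,M({\bf p})=n$, so that the kernel is one-dimensional. The condition ${\rm rk}\,M({\bf p})=n$ is Zariski open in the pair $(\Pi,p)$, and $p$ can be moved by ${\rm PGL}(V)$. It therefore suffices to exhibit one $(n+1)$-dimensional subspace of $\wedge^2V^*$ and one point where the rank is $n$. Take $p=[e_0]$ and ${\bf A}_i=e_0^*\wedge e_i^*$ for $i=1,\dots,n$, with ${\bf A}_0$ arbitrary, completing to a subspace of the right dimension. Then ${\bf A}_ie_0=\pm e_i^*$ for $i\geq1$, and these are independent. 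Consequently, for general $\Pi$ there is a dense open set $W\subset\bbP^n$ over which $p\mapsto\omega(p)=\bbP(\ker M({\bf p}))$ is a well-defined rational map $\bbP^n\dasharrow\mathcal C_\Pi$.

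It remains to compare this with Palatini lines, that is, with centres of complexes in $U_\Pi$. I would consider the incidence $I=\{(\omega,p)\in\mathcal C_\Pi\times\bbP^n : p\in{\rm centre}(\omega)\}$. Over $U_\Pi$, which is irreducible of dimension $n-1$ because $\mathcal C_\Pi$ is irreducible for $c\geq3$, $I$ is a $\bbP^1$-bundle, giving an irreducible component $I_0$ of dimension $n$. Over the singular locus of $\mathcal C_\Pi$, which has dimension at most $n-6$, centres have dimension at most $3$ (odd corank, at least $4$ in vector terms). Indeed, using that $\mathcal C_\Pi$ is a general linear section of $\mathcal C_n$, the stratification by corank has the expected codimensions. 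So the remaining part of $I$ has dimension less than $n$ and cannot dominate $\bbP^n$. The fibre of $I\to\bbP^n$ over $p\in W$ is the single point $\omega(p)$, so $I_0\to\bbP^n$ is generically finite and hence, by dimension count, surjective. Being birational over $W$, it follows that the general point of $\bbP^n$ lies on exactly one Palatini line, and $P_\Pi=\bbP^n$.

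The main obstacle I expect is this last bookkeeping step: making sure that the unique complex $\omega(p)$ attached to a general $p$ lies in $U_\Pi$ rather than in the singular locus. That requires the dimension estimate on the higher-corank strata of $\mathcal C_\Pi$, which I would take from the codimension-$5$ statement for ${\rm Sing}(\mathcal C_n)$ and the generality of $\Pi$. The rank computation itself is straightforward once the explicit example is in hand.
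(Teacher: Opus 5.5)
Your proof is correct, and its core is the paper's: $p$ lies in the centre of $\sum_i\lambda_i{\bf A}_i$ if and only if $n$ linear conditions hold on the $n+1$ parameters. The paper's equations $a_{01}=\cdots=a_{0n}=0$ are exactly your observation that the columns ${\bf A}_i{\bf p}$ lie in ${\bf p}^\perp$.

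The two routes differ in the remaining steps. For uniqueness the paper argues indirectly: a solution space of dimension $\geq 2$ would give infinitely many Palatini lines through a general point, which is impossible because the $n$-dimensional family $\mathcal P_\Pi$ dominates $\bbP^n$. You instead prove directly, by specializing to ${\bf A}_i=e_0^*\wedge e_i^*$, that ${\rm rk}\,M({\bf p})=n$ for general $(\Pi,p)$. This gives uniqueness of the complex at once, not just finiteness. Your incidence count then supplies what the paper leaves to ``the generality of $p$'', namely that this unique complex lies in $U_\Pi$.

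One correction in that step: it is not true that centres of complexes in $\mathcal C_\Pi$ have dimension at most $3$. The corank-$\geq 6$ locus has codimension $15$ in $\bbP(\wedge^2V^*)$, so $\mathcal L_\Pi\cong\bbP^n$ meets it as soon as $n\geq 15$, and there the centres are $\bbP^5$'s. Your appeal to the expected codimensions of the corank strata repairs this. The corank-$2r$ stratum of $\mathcal C_\Pi$ has dimension $n-r(2r-1)$ and centres $\bbP^{2r-1}$, so it contributes a subset of $I$ of dimension $n-(r-1)(2r-1)\leq n-3$. With that bound the rest of your argument goes through unchanged.
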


\begin{proof} The linear system $\mathcal L_\Pi$ of line complexes can be represented  by  an equation of the type \eqref {eq:compo}, where ${\bf A}$ is an antisymmetric matrix ${\bf A}=(a_{ij})_{0\leq i,j\leq n}$ depending linearly on $n+1$  homogeneous parameters. Fix a general point $p\in \bbP^n$. By choosing suitable coordinates, we may assume that $p$ has homogeneous coordinates $[1,0,\ldots, 0]$. Then $p$ is in the centre of a line complex defined by \eqref {eq:compo} if and only if 
\begin{equation}\label{eq:lop}
a_{01}= \cdots =a_{0,n}=0.
\end{equation}
These are $n$ linear equations in the $n+1$ parameters, so the system has some non--trivial solution, and therefore $p$ is in the center of some degenerate line complex $\omega$ in $\mathcal L_\Pi$. By the generality of $p$, $\omega$ is a smooth point of $\mathcal C_\Pi$. This proves that $P_\Pi$ coincides with $\bbP^n$. The same argument proves also that  a unique Palatini line passes  through a  general point of $\bbP^n$. Indeed if the system \eqref {eq:lop} has more than one solution, it has infinitely many solutions. But, since $P_\Pi=\bbP^n$,  it is not possible that through the general point of $\bbP^n$ pass infinitely many Palatini lines. This proves the assertion.
\end{proof}

\subsection{The rationality theorem}\label{ssec:rat}

As an immediate consequence of Theorem \ref {thm:basic}, we have the following:

\begin{theorem}[The rationality theorem]\label{thm:rat} Let $n=2k+1$. Then the general Pfaffian hypersurface of degree $k+1$ in $\bbP^n$ is rational.
\end{theorem}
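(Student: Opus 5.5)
The plan is to realize the general Pfaffian hypersurface of degree $k+1$ in $\bbP^n$ as $\mathcal C_\Pi$ for a general $\Pi$ of codimension $n+1$. Then Theorem \ref{thm:basic} gives a birational map $\bbP^n\dasharrow \mathcal C_\Pi$: send a point to the complex whose centre contains it. Under this map each fibre is a Palatini line, so $\mathcal C_\Pi$ is birational to the quotient of $\bbP^n$ by the congruence. That by itself only gives unirationality, so the map has to be cut down to a rational section of the congruence.

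First I need the identification. A Pfaffian hypersurface of degree $k+1$ in $\bbP^n$ is by definition $\{{\rm Pf}({\bf A}(\lambda))=0\}$, where ${\bf A}(\lambda)=\sum_{i=0}^n\lambda_i{\bf A}_i$ is a pencil-type linear family of $(n+1)\times(n+1)$ antisymmetric matrices. Choosing $n+1$ general antisymmetric matrices ${\bf A}_i$ amounts to choosing a general $n$-dimensional linear space $\mathcal L_\Pi\subset(\bbP^{\frac{n(n+1)}2-1})^\vee$. Its orthogonal $\Pi$ is then a general linear space of codimension $n+1$, and the hypersurface equals $\mathcal C_\Pi=\mathcal L_\Pi\cap\mathbb G(1,n)^\vee$. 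So the generality hypotheses of Section \ref{ssec:CY} apply.

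Second, I define $\psi:\bbP^n\dasharrow\mathcal C_\Pi$. For a point $p=[{\bf x}]$, the conditions ${\bf A}(\lambda){\bf x}={\bf 0}$ are $n+1$ equations, bilinear in $(\lambda,{\bf x})$. They are not independent, because ${\bf x}^t{\bf A}(\lambda){\bf x}=0$ identically, so only $n$ of them count; this is the coordinate computation \eqref{eq:lop} made intrinsic. By Theorem \ref{thm:basic}, for general $p$ the solution $\lambda$ is unique up to scaling. Moreover it is given by rational functions of ${\bf x}$: up to a common factor, the maximal minors of the $(n+1)\times(n+1)$ matrix $M({\bf x})$ whose columns are ${\bf A}_i{\bf x}$, which has rank $n$. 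So $\psi$ is a dominant rational map whose general fibre is the Palatini line $\ell_\omega$ of $\omega=\psi(p)$.

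Third, I choose a general hyperplane $H\subset\bbP^n$. A general Palatini line meets $H$ in exactly one point, and for general $\omega\in U_\Pi$ that point is general on $H$. Hence $\psi|_H:H\dasharrow\mathcal C_\Pi$ is generically injective. It is also dominant: both sides have dimension $n-1$, and every Palatini line meets $H$, so $\psi(H)$ contains the dense set $U_\Pi$. Therefore $\psi|_H$ is birational and $\mathcal C_\Pi$ is rational. The inverse map sends $\omega$ to $\ell_\omega\cap H$, which is rational in $\omega$ because the centre is cut out linearly by ${\bf A}(\omega){\bf x}={\bf 0}$.

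The main obstacle is the generic injectivity of $\psi|_H$, that is, making sure the generic fibre of $\psi$ is exactly the line $\ell_\omega$. The fibre contains the line, since every point of the centre works. The converse, that a point $q$ with $\psi(q)=\omega$ lies in $\ell_\omega$, is automatic: $\psi(q)=\omega$ means ${\bf A}(\omega){\bf q}={\bf 0}$. The remaining delicate point is that a general point of $H$ should not lie on the centres of infinitely many complexes. This follows from the uniqueness in Theorem \ref{thm:basic}, together with the fact from Lemma \ref{lem:gen} and Remark \ref{rem:ram1} that a general $\omega$ is a smooth point with a line as centre. I would check this via the dimension count: the points of $\bbP^n$ where $M({\bf x})$ drops rank form a proper closed subset, and a general $H$ is not contained in it.
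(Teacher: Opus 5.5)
Your proposal is correct and follows essentially the same route as the paper. You identify the general Pfaffian hypersurface with $\mathcal C_\Pi$, then use Theorem \ref{thm:basic} to see that a general hyperplane $H$ is a rational section of the Palatini congruence, so that $\omega\mapsto \ell_\omega\cap H$ (the paper's map $U_\Pi\dasharrow\pi$) and its inverse $\psi|_H$ are birational. One small slip: the coordinates of $\psi$ are, up to a common factor, the $n\times n$ cofactors of $M({\bf x})$ (equivalently the maximal minors of the $n\times(n+1)$ matrix obtained by deleting a row), not the maximal minors of the square matrix $M({\bf x})$, whose only maximal minor is its determinant and vanishes identically.
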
 

\begin{proof} The general Pfaffian hypersurface of degree $k+1$ in $\bbP^n$ is of the form $\mathcal C_\Pi$, with $\Pi$ general of codimension $n+1$ in $\bbP^{\frac {n(n+1)}2-1}$. Recall that we have the Palatini family $\phi_\Pi: \mathcal P_\Pi\longrightarrow U_\Pi$, where $U_\Pi$ is the smooth locus of $\mathcal C_\Pi$. We have the tautological morphism $\mathcal P_\Pi\longrightarrow \bbP^n$  that, by Theorem \ref {thm:basic}, is birational. Fix a general hyperplane $\pi\cong \bbP^{n-1}$ of $\bbP^n$. Then we have a rational map $U_\Pi \dasharrow \pi$, that sends a point of $U_\Pi$, corresponding to a Palatini line $\ell$, to the intersection of $\ell$ with $\pi$. The rational map $U_\Pi \dasharrow \pi$ is birational by Theorem \ref {thm:basic}, and the assertion follows. 
\end{proof}

Note that the first instance of this theorem, for $n=5$, says that the general Pfaffian cubic fourfold is rational, a fact proved already in \cite[Prop. 5] {BD}.

\subsection{The focal locus} \label{ssec:focal} We start by recalling a few facts about foci of families of lines. For the general theory of foci see \cite {CS}. 

Let  $\mathcal F$ be a family of dimension $n-1$ of lines in $\bbP^n$ filling up the whole space. If $\ell\cong \bbP^1$ is a general line of  $\mathcal F$, $n-1$ independent elements of the tangent space $T_{\mathcal F, \ell}$ give $n-1$ sections of the normal bundle $N_{\ell|\bbP^n}\cong \mathcal O_{\bbP^1}(1)^{n-1}$. The collection of these $n-1$ sections gives rise to a square matrix of order $n-1$ of linear forms on $\ell$, whose determinant is not identically 0. The zero locus of this determinant defines a 0--dimensional subscheme  of $\ell$ of length $n-1$ that is called the \emph{scheme of foci} or the \emph{focal scheme} of the family $\mathcal F$ on $\ell$. The \emph{focal locus} of the family $\mathcal F$ is the closure of the  locus described by the focal scheme of $\ell$ when $\ell$ varies in $\mathcal F$.  

We now apply the theory of foci to the Palatini family  $\phi_\Pi: \mathcal P_\Pi\longrightarrow U_\Pi$, where $U_\Pi$ is the smooth locus of $\mathcal C_\Pi$. We will denote by $Z_\Pi$  the focal locus of the Palatini family.

\begin{lemma}\label{lem:inf} The focal locus $Z_\Pi$  of the Palatini family coincides with the set of points $p\in \bbP^n$ such that there are infinitely many Palatini lines passing through $p$. As a consequence $\dim(Z_\Pi)\leq n-2$.
\end{lemma}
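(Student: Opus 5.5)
Since the focal locus is the image of the ramification of a map of equidimensional varieties, I will prove the lemma by identifying foci with ramification points of the tautological map $f\colon \mathcal P_\Pi\to\bbP^n$ and then computing the fibre of $f$ over a point directly.

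\textbf{Foci as ramification.} By Theorem \ref{thm:basic}, $f$ is a dominant map between varieties of dimension $n$. It is in fact birational, since the general fibre is a single point. Standard foci theory (see \cite{CS}) identifies the focal scheme on a line $\ell$ with the locus of points of $\ell$ where the differential of $f$ drops rank. Indeed, at a point $x\in\ell$ the differential $df_x$ is the direct sum of the identity on $T_{\ell,x}$ with the evaluation at $x$ of the characteristic map $T_{\mathcal F,\ell}\to H^0(N_{\ell|\bbP^n})$. Its determinant is therefore the determinant of the $(n-1)\times(n-1)$ matrix of linear forms, evaluated at $x$. Hence $Z_\Pi$ is the closure of the image of the ramification locus of $f$.

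\textbf{Fibres of $f$.} For $p\in\bbP^n$, the complexes of $\mathcal L_\Pi$ whose centre contains $p$ are the solutions of the linear system \eqref{eq:lop}. This is $n$ linear equations in $n+1$ unknowns, so the solution set is a linear subspace $\Lambda_p\subset\mathcal L_\Pi$ of dimension $\geq 0$, contained in $\mathcal C_\Pi$.
\begin{itemize}
\item If $\Lambda_p$ is a single point, at most one Palatini line passes through $p$.
\item If $\dim\Lambda_p\geq 1$, infinitely many complexes have $p$ in their centre. Using Lemma \ref{lem:gen} and the finiteness of $R_\Pi\to\mathcal C_\Pi$ (Remark \ref{rem:ram1}), these yield infinitely many Palatini lines through $p$, passing to closures where needed.
\end{itemize}

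\textbf{The two inclusions.}

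Suppose first that $p$ lies on infinitely many Palatini lines. Then $f^{-1}(p)$ is positive dimensional. Since $f$ is birational, $p$ lies in the exceptional image, so $df$ drops rank at points over $p$. It follows that $p$ is a focus on these lines, and hence $p\in Z_\Pi$.

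Conversely, suppose $p$ is a focus on a general line $\ell$, so $df$ degenerates at $(\ell,p)$. I want to show that $\Lambda_p$ is positive dimensional. The strategy is to compute $df$ in the linear-algebra model. A tangent vector to $\mathcal P_\Pi$ is a pair $(\dot{\bf A},\dot{\bf x})$ with $\dot{\bf A}{\bf x}+{\bf A}\dot{\bf x}=0$. A kernel vector of $df$ has $\dot{\bf x}\in\langle{\bf x}\rangle$, hence satisfies $\dot{\bf A}{\bf x}=0$ after adjusting. Such a vector is a nonzero tangent direction of $\Lambda_p$ at $\omega$. Therefore the rank of the linear system \eqref{eq:lop} drops, i.e.\ $\dim\Lambda_p\geq1$. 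Since $\Lambda_p$ is linear, this gives infinitely many Palatini lines through $p$.

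\textbf{Main obstacle.} The difficult step is this last one. One must make sure that the infinitesimal condition really forces the \emph{linear} fibre $\Lambda_p$ to be positive dimensional, and not merely non-reduced. Linearity of the equations \eqref{eq:lop} in the parameters is what saves the argument: the scheme-theoretic fibre is a linear space, so a nonzero tangent direction implies that it has positive dimension.

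\textbf{Dimension bound.} Since $f$ is birational and $\mathcal P_\Pi$ is irreducible of dimension $n$, the locus $E\subset\mathcal P_\Pi$ of positive-dimensional fibres is a proper closed subset, so $\dim E\leq n-1$. Its image therefore has dimension at most $n-2$, which gives $\dim(Z_\Pi)\leq n-2$.
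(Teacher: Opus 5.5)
Your proof is correct, and for the key direction it takes a different route from the paper. Both arguments begin by identifying foci with the points where the differential of $f\colon\mathcal P_\Pi\to\bbP^n$ drops rank. The paper then says only that, $f$ being birational, $df$ drops rank exactly at points of positive-dimensional fibres. The nontrivial half of that claim is really Zariski's Main Theorem: a birational morphism onto the normal variety $\bbP^n$ is a local isomorphism wherever it is quasi-finite.

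You prove this half by hand instead. On the incidence $\{A(t){\bf x}=0\}$, whose fibre over $p$ is the linear space $\Lambda_p$, a kernel vector of $df$ at $(\omega,p)$ yields $\dot t\notin\langle t\rangle$ with $A(\dot t){\bf x}=0$. The linearity (hence reducedness) of $\Lambda_p$ then upgrades this infinitesimal statement to $\dim\Lambda_p\geq 1$. Since $\omega\in\Lambda_p\cap U_\Pi$, this nonempty open subset of $\Lambda_p$ is infinite.

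Your route is more elementary and does not use birationality at all in this direction. It also gives you, for free, the identification of $Z_\Pi$ with $\{p:\dim\Lambda_p\geq 1\}$, i.e. the degeneracy locus of $\alpha$. The paper uses exactly this fact as ``clear'' in the proof of Theorem \ref{thm:prec}. The paper's argument, in turn, is shorter and works for any birational family of lines, whether or not its fibres are linear.

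A few small points:
\begin{itemize}
\item Your second bullet, with ``passing to closures where needed'' and the appeal to finiteness of $R_\Pi\to\mathcal C_\Pi$, is the vaguest part of the proposal. If $\Lambda_p$ missed $U_\Pi$, it would produce no Palatini lines at all. It is, however, not needed: in the converse direction you already have $\omega\in\Lambda_p\cap U_\Pi$.
\item To count lines rather than complexes, what you actually need is that $\omega\mapsto\ell_\omega$ is injective on $U_\Pi$; this is the identification of $\overline{U_\Pi}$ with $R_\Pi$ in Remark \ref{rem:ram}. Alternatively, read ``infinitely many Palatini lines'' as ``positive-dimensional fibre of $f$'', as the paper implicitly does.
\item You and the paper share one tacit assumption: ramification points lying over the special lines through $p$ are taken to lie in the closure of the foci of general lines. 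In other words, it is assumed that no Palatini line lies entirely in the ramification divisor.
\item $df$ is block-triangular rather than a direct sum. This changes nothing about its determinant.
\end{itemize}
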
 

\begin{proof} From  \cite {CS} it follows that, for a general Palatini  line  $\ell$,   that we can see as a fibre of $\phi_\Pi$, the focal locus coincides with the subscheme of $\ell$ where the differential of the tautological morphism  $\mathcal P_\Pi\longrightarrow \bbP^n$ is not of maximal rank $n$. Since the tautological morphism is birational (see Theorem \ref {thm:basic}), then its differential drops rank exactly at those points belonging to positive dimensional fibres.  The assertion follows. \end{proof}

We can be more precise about the focal locus $Z_\Pi$ arguing as follows. Consider the twisted Euler sequence 
$$
0\longrightarrow \mathcal O_{\bbP^n}(-1)\longrightarrow \mathcal  O_{\bbP^n}^{\oplus n+1}\longrightarrow T_{\bbP^n}(-1)\longrightarrow 0.
$$
The projectivization of the fibre of $T_{\bbP^n}(-1)$ at a point $p\in \bbP^n$  parametrizes the set of lines in $\bbP^n$ through the point $p$. Consider now a hyperplane $H\in \mathcal L_\Pi$. We claim that it defines a  map of vector bundles 
$$
\alpha_H: T_{\bbP^n}(-1) \otimes \mathcal O_{\bbP^n}(-1)\cong T_{\bbP^n}(-2) \to \mathcal O_{\bbP^n}.
$$
Indeed, if $\bbP^n=\bbP(V)$,  where  $V$ is a vector space of dimension $n+1$, we may identify $H$ with a bilinear form  $h\in \wedge^2 V^*$. By the above considerations, a point in $T_{\bbP^n}(-1) \otimes \mathcal O_{\bbP^n}(-1)\cong T_{\bbP^n}(-2)$ can be identified with $u\wedge v$, with $[u]$ defining a point in $\bbP^n$ and $u\wedge v$ a line through $[u]$. Then $\alpha_H(u\wedge v):=h(u,v)$. 

Now $\mathcal L_\Pi$ is spanned by $n+1$ (general) linearly independent hyperplanes $H_0,\ldots, H_n$, and therefore the datum of $\alpha_{H_0}, \ldots, \alpha_{H_n}$  gives an injective map 
\begin{equation}\label{eq:alpha}
\alpha: T_{\bbP^n}(-2) \longrightarrow \mathcal O_{\bbP^n}^{n+1}.
\end{equation}

\begin{theorem}\label{thm:prec} One has $\dim(Z_\Pi)= n-2$ and 
$$
\deg(Z_\Pi)={n\choose 2}-1.
$$
The Palatini lines are (n-1)--secant $Z_\Pi$ and through a general point of $Z_\Pi$ pass the lines of a family of dimension 1.
\end{theorem}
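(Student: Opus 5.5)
The plan is to realize $Z_\Pi$ as a degeneracy locus of the map $\alpha$ in \eqref{eq:alpha} and to compute its class with Porteous' formula. At a point $p=[u]$, the fibre of $T_{\bbP^n}(-2)$ is $V/\langle u\rangle$ up to twist, of rank $n$. The map $\alpha_p$ sends $v\mapsto (h_0(u,v),\dots,h_n(u,v))$. Dually, consider the transpose $\alpha^\vee_p:\mathbb C^{n+1}\to (V/\langle u\rangle)^*$, sending $(\lambda_i)\mapsto \sum\lambda_i h_i(u,\cdot)$. Its kernel is exactly the set of complexes $\omega\in\mathcal L_\Pi$ whose centre contains $p$, which is the linear system \eqref{eq:lop}. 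Hence the number of Palatini lines through $p$ is controlled by $\dim\ker\alpha_p^\vee=(n+1)-{\rm rk}(\alpha_p)$. Generically ${\rm rk}=n$, giving one complex, consistent with Theorem \ref{thm:basic}. A positive-dimensional family of Palatini lines through $p$ requires at least a pencil of complexes, i.e. ${\rm rk}(\alpha_p)\le n-1$. Conversely, if ${\rm rk}\le n-1$, the kernel contains a line of $\mathcal L_\Pi$; by generality of $\Pi$ this pencil is not entirely contained in the singular locus of $\mathcal C_n$ and meets $U_\Pi$ in infinitely many points. So, by Lemma \ref{lem:inf}, $Z_\Pi=D_{n-1}(\alpha)$ set-theoretically.

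For a general $\Pi$, i.e. general $h_0,\dots,h_n$, the bundle map is general enough for $D_{n-1}(\alpha)$ to have expected codimension $(n-(n-1))(n+1-(n-1))=2$, so $\dim Z_\Pi=n-2$. To justify expected dimension, I would use a dimension count on the incidence $\{(p,\Pi): {\rm rk}\,\alpha_p\le n-1\}$, fixing $p$ and noting that the condition on the $n+1$ vectors $h_i(u,\cdot)\in (V/u)^*$ is codimension $2$. By Porteous, $[Z_\Pi]=c_2(\mathcal O^{n+1}-T_{\bbP^n}(-2))$, i.e. the degree-$2$ part of $c(T_{\bbP^n}(-2))^{-1}$. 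From Euler, $c(T(-2))=c(\mathcal O(-1))^{n+1}/c(\mathcal O(-2))$, so the inverse is $(1-2H)(1-H)^{-(n+1)}$, whose $H^2$ coefficient is
$$\binom{n+2}{2}-2(n+1)=\tfrac{(n+1)(n+2)-4(n+1)}{2}=\tfrac{(n+1)(n-2)}{2}=\binom n2-1.$$
Thus $\deg Z_\Pi=\binom n2-1$, provided the degeneracy scheme is reduced. Reducedness for general $\Pi$ should follow from a local computation of the tangent cone at a general point, or from a Bertini-type argument on the universal degeneracy locus; this is the step I expect to need most care.

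For the remaining assertions, Lemma \ref{lem:inf} and the description of foci in \cite{CS} say that on a general Palatini line $\ell$ the focal scheme has length $n-1$ and is supported on $\ell\cap Z_\Pi$. I would show it is reduced, e.g. by checking on one explicit example or via the $n=5$ case of \cite{BD} plus a degeneration, so $\ell$ is $(n-1)$-secant to $Z_\Pi$. Finally, consider the incidence $I=\{(p,\ell): p\in Z_\Pi\cap\ell\}$. It has dimension $n-1$, being a finite cover of degree $n-1$ over the $(n-1)$-dimensional family. It maps onto the $(n-2)$-dimensional $Z_\Pi$, so the general fibre is $1$-dimensional, matching ${\rm rk}\,\alpha_p=n-1$ exactly on the general point of $Z_\Pi$, where the kernel is a pencil.
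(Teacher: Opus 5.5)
For the dimension and the degree you follow the paper's route ($Z_\Pi$ as the degeneracy locus $D_{n-1}(\alpha)$, then Porteous), and your additions are sound. The computation $\bigl[(1-2H)(1-H)^{-(n+1)}\bigr]_2=\frac{(n+1)(n-2)}{2}=\binom n2-1$ is right. Your incidence count bounds $\dim Z_\Pi\le n-2$ without Lemma \ref{lem:inf}, which is what the paper uses. Generic smoothness of the universal degeneracy locus (smooth away from the rank $\le n-2$ stratum, which has codimension $6$) supplies the reducedness that the paper tacitly assumes when it reads the degree off Porteous' formula.

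The step that does not work as proposed is the reducedness of the focal scheme on a general Palatini line, which the $(n-1)$-secancy needs; the paper just calls it obvious. A degeneration from the case $n=5$ is not available, since changing $n$ changes the Grassmannian and no family links the two situations. An explicit example settles only one value of $n$ at a time.

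A uniform argument is close at hand:
\begin{enumerate}
\item At $p=[u]$, a nonzero $v\in\ker\alpha_p\subset V/\langle u\rangle$ means $h_i(u,v)=0$ for all $i$, i.e.\ $[u\wedge v]\in\Pi\cap\mathbb G(1,n)=X_\Pi$. Hence $D_{n-1}(\alpha)=B_\Pi$ set-theoretically, which also yields Theorem \ref{thm:main} at once.
\item By the Segre-cone transversality in the proof of Lemma \ref{lem:prel}, a general Palatini line $\ell=\ell_\omega$ meets exactly $n-1$ base lines.
\item These base lines hit $\ell$ in distinct points. Two base lines through the same point $p$ would force $\mathrm{rk}\,\alpha_p\le n-2$. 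By your incidence count, the Palatini lines meeting the $(n-6)$-dimensional locus $D_{n-2}(\alpha)$ form a family of dimension at most $n-4<n-1$, so a general $\ell$ avoids it.
\item Each of these $n-1$ points is a focus of $\ell$. Indeed, $\bbP(\ker\alpha_p^\vee)$ is a pencil in $\mathcal C_\Pi$ through $\omega\in U_\Pi$, so $(\omega,p)$ lies on a positive-dimensional fibre of $\mathcal P_\Pi\to\bbP^n$.
\end{enumerate}
A focal scheme of length $n-1$ with $n-1$ distinct points in its support is reduced, and its support is $\ell\cap Z_\Pi$. This gives the $(n-1)$-secancy, after which your incidence argument for the last assertion is fine.
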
 

\begin{proof} The focal locus $Z_\Pi$ is clearly the degeneracy locus of $\alpha$. This implies that $\dim(Z_\Pi)\geq n-2$, hence $\dim(Z_\Pi)= n-2$ by Lemma \ref {lem:inf}. As for the formula of the degree of $Z_\Pi$, this is a standard application of Porteous formula. The final assertion is obvious. \end{proof}

\section{Base lines} \label{sec:base}

\subsection {The locus of base lines} \label{ssec:base}

In the Calabi--Yau case, the expected dimension of the locus $B_\Pi$ of base lines is $n-2$. More precisely, we have:

\begin{proposition} \label{prop:fin} Let $\ell$ be a general base line of $\mathcal L_\Pi$. Then $\ell$ does not meet any other base line of $\mathcal L_\Pi$. As a  consequence,   through the general point of $B_\Pi$ passes a unique base line of $\mathcal L_\Pi$ and the dimension of  the locus   $B_\Pi$ of base lines of $\mathcal L_\Pi$ is $n-2$. 
\end{proposition}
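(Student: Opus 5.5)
We argue by a dimension count on the incidence variety of pairs of meeting base lines. Fix $\Pi$ general of codimension $n+1$. Consider
$$
I=\{(\ell,\ell')\in \mathbb G(1,n)\times \mathbb G(1,n)\,:\, \ell\neq \ell',\ \ell\cap\ell'\neq\emptyset\},
$$
and the closed subset $I_\Pi=I\cap (X_\Pi\times X_\Pi)$. We must show that $I_\Pi$ does not dominate $X_\Pi$ under the first projection. Since $\dim(X_\Pi)=2n-2-(n+1)=n-3$, it is enough to show that $\dim(I_\Pi)\leq n-4$, which in particular includes the case $I_\Pi=\emptyset$.

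First we compute $\dim(I)$. A pair of distinct meeting lines is determined by their common point $p$ (there are $n$ parameters) and two distinct lines through $p$ (there are $2(n-1)$ parameters). Hence $\dim(I)=3n-2$.

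Next, for $(\ell,\ell')\in I$ we count how many conditions it imposes on $\Pi$ that both $[\ell]$ and $[\ell']$ lie in $\Pi$. Two meeting lines span a plane, so $[\ell]$ and $[\ell']$ are distinct points whose joining line lies in $\mathbb G(1,n)$. In any case $[\ell]$ and $[\ell']$ span a projective line $L$ in the Plücker space. The condition $[\ell],[\ell']\in\Pi$ is the condition $L\subset\Pi$, which imposes $2(n+1)$ independent linear conditions on a codimension $n+1$ subspace. Equivalently, we can work in the incidence
$$
J=\{((\ell,\ell'),\Pi)\,:\, (\ell,\ell')\in I,\ L\subset \Pi\}.
$$
Its fibre over each point of $I$ is the Grassmannian of codimension $(n+1)$ subspaces containing a fixed line. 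This fibre has codimension $2(n+1)$ in the Grassmannian $G$ of all codimension $(n+1)$ subspaces. Therefore $\dim(J)=3n-2+\dim(G)-2n-2=\dim(G)+n-4$. For general $\Pi$, the fibre of $J\to G$ is either empty or has dimension $\dim(J)-\dim(G)=n-4$. So $\dim(I_\Pi)\leq n-4<n-3=\dim(X_\Pi)$, and a general $\ell\in X_\Pi$ meets no other base line.

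The consequences are then formal. The universal family $\mathcal X_\Pi$ has dimension $n-2$. By what we just proved, the morphism $\mathcal X_\Pi\to B_\Pi$ is injective over the complement of the locus swept out by the lines $\ell$ that do meet another base line. That locus is the image of a $\mathbb P^1$-bundle over a proper closed subset of $X_\Pi$, so it has dimension at most $n-3$. Hence $\mathcal X_\Pi\to B_\Pi$ is generically injective. It follows that $\dim(B_\Pi)=n-2$ and that a unique base line passes through the general point of $B_\Pi$.

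The only delicate point is the uniformity of the codimension count. We need that containing the fixed line $L$ imposes exactly $2(n+1)$ conditions on every point of $I$. This holds because $[\ell]\neq[\ell']$ always, so $L$ is always a genuine line. The fibre dimension statement for general $\Pi$ then follows from generic smoothness of $J\to G$, or simply from the theorem on the dimension of fibres.
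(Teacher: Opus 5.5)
Your proof is correct, and it takes a different route from the paper's.

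\textbf{The paper's argument.} It works locally at a fixed base line $\ell$. The lines meeting $\ell$ form $\mathbb I_\ell=\mathbb G(1,n)\cap T_{\mathbb G(1,n),\ell}$, a cone with vertex $[\ell]$ over ${\rm Seg}(1,n-2)\subset\bbP^{2n-3}$. Hence $\mathbb I_\ell\cap X_\Pi$ is the cone over the intersection of ${\rm Seg}(1,n-2)$ with the $(n-4)$-dimensional projection from $[\ell]$ of $\Pi\cap T_{\mathbb G(1,n),\ell}=T_{X_\Pi,[\ell]}$. Since the Segre variety has codimension $n-2>n-4$, this intersection is empty ``by the generality of $\Pi$''.

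\textbf{Your argument.} You run a global dimension count on the incidence $J$ of pairs of distinct meeting lines together with $\Pi$. Your numbers are right: $\dim I=3n-2$, containing the line $L$ imposes $2(n+1)$ conditions, and hence $\dim I_\Pi\le n-4<n-3=\dim X_\Pi$.

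\textbf{What each approach buys.} Your approach gives rigor exactly where the paper is brief. In the paper $\ell$ depends on $\Pi$, so the claim that the $(n-4)$-space is general with respect to the Segre variety really needs an incidence argument like yours. Your route also avoids the tangent space description altogether. The paper's local picture, on the other hand, is what gets reused in Lemma \ref{lem:prel}. There the same cone over ${\rm Seg}(1,n-2)$, cut by a codimension $n-2$ space, gives the exact number $n-1=\deg {\rm Seg}(1,n-2)$ of base lines meeting a Palatini line. A pure dimension count cannot supply that enumerative information.

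\textbf{One step to make explicit.} Let $D\subset X_\Pi$ be the closure of the set of base lines meeting another base line, and $S\subset B_\Pi$ the locus swept out by the lines of $D$. To pass from ``$\mathcal X_\Pi\to B_\Pi$ is injective over $B_\Pi\setminus S$'' to generic injectivity, you need $B_\Pi\not\subset S$. This holds because a base line not in $D$ meets no line of $D$. So the preimage of $S$ in $\mathcal X_\Pi$ is exactly the $\bbP^1$-bundle over $D$, which is a proper closed subset of the irreducible variety $\mathcal X_\Pi$.
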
 

\begin{proof}  Let $\ell$ be a general base line of $\mathcal L_\Pi$. The set $\mathbb I_\ell$ of points of $\mathbb G(1,n)$ corresponding  to incident lines to $\ell$ is well known to be the intersection of $\mathbb G(1,n)$ with the tangent space $T_{\mathbb G(1,n),\ell}$, and $\mathbb I_\ell$ is the cone, with vertex the point $[\ell]\in T_{\mathbb G(1,n),\ell}$ over the Segre variety ${\rm Seg}(1,n-2)\cong \bbP^1\times \bbP^{n-2}$,  Segre   embedded in $\bbP^{2n-3}$. In  particular, the intersection of $\mathbb I_\ell$ with any  hyperplane in $T_{\mathbb G(1,n),\ell}$ not containing $[\ell]$ is ${\rm Seg}(1,n-2)$.

The intersection of $\mathbb I_\ell$ with $X_\Pi$ equals the intersection of $\mathbb I_\ell$ with $\Pi\cap T_{\mathbb G(1,n),\ell}$, where  $\dim (\Pi\cap T_{\mathbb G(1,n),\ell})=2n-2-(n+1)=n-3$. 
Since both $\mathbb I_\ell$ and $\Pi\cap T_{\mathbb G(1,n),\ell}$ are cones with vertex $[\ell]$, their intersection is also the cone, with vertex $[\ell]$, over the intersection of ${\rm Seg}(1,n-2)$ with a general hyperplane section of $\Pi\cap T_{\mathbb G(1,n),\ell}$,  inside \color{black} the ambient linear space of dimension $\dim (T_{\mathbb G(1,n),\ell})-1=2n-3$, that is,the span of ${\rm Seg}(1,n-2)$. By 
the  generality of $\Pi$, this intersection is clearly empty, and therefore $\mathbb I_\ell\cap X_\Pi=\{[\ell]\}$, as wanted. This proves the first assertion of the statement. The rest is an obvious consequence.
\end{proof}

\begin{corollary}\label{cor:sec} The Calabi--Yau variety $X_\Pi$ is birationally equivalent to a general hyperplane section of $B_\Pi$. 
\end{corollary}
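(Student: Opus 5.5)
The natural approach is to use the incidence variety $\mathcal X_\Pi\subset X_\Pi\times \bbP^n$, i.e.\ the universal family of base lines, together with its two projections. The first projection $\mathcal X_\Pi\to X_\Pi$ is a $\bbP^1$-bundle. By Proposition \ref{prop:fin}, the second projection $f:\mathcal X_\Pi\to B_\Pi$ is generically injective, because through the general point of $B_\Pi$ passes a unique base line. Since $\dim \mathcal X_\Pi = n-2=\dim B_\Pi$, $f$ is birational onto $B_\Pi$.

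Now fix a general hyperplane $H\subset \bbP^n$ and consider $f^{-1}(B_\Pi\cap H)$. This is the family of pairs $([\ell],p)$ with $p\in \ell\cap H$. A line $\ell$ not contained in $H$ meets $H$ in exactly one point. The sublocus of lines of $X_\Pi$ contained in $H$ has dimension at most $\dim X_\Pi - 2$: lines in a general hyperplane impose two conditions, and $H$ is general. Hence the first projection restricts to a birational morphism $f^{-1}(B_\Pi\cap H)\to X_\Pi$, given by $[\ell]\mapsto \ell\cap H$ on the open set of lines not in $H$.

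Next, since $f$ is birational and $H$ is general, the hyperplane section $H$ does not contain the exceptional locus of $f$. More precisely, the locus in $B_\Pi$ over which $f$ is not an isomorphism is a proper closed subset, and a general $H$ cuts it in a proper closed subset of $B_\Pi\cap H$. Therefore $f$ restricts to a birational map $f^{-1}(B_\Pi\cap H)\to B_\Pi\cap H$, at least on the component dominating $X_\Pi$. Bertini gives irreducibility of $B_\Pi\cap H$, since $\dim B_\Pi=n-2\ge 3\ge 2$. Composing the two birational maps gives $X_\Pi\dashrightarrow B_\Pi\cap H$, sending $[\ell]\mapsto \ell\cap H$.

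The only delicate point is to make sure that $f^{-1}(B_\Pi\cap H)$ has no extra component that could interfere with irreducibility. An extra component could only come from lines contained in $H$, and the dimension count above handles it. Finally, the map $[\ell]\mapsto \ell\cap H$ is the restriction of a linear projection in Plücker coordinates, which explains the remark that the section is a projection of $X_\Pi$.
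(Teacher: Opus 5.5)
Your proposal is correct and follows the paper's own argument: both use the map $[\ell]\mapsto \ell\cap H$ and get its birationality from Proposition \ref{prop:fin}. You additionally spell out, through the universal family $\mathcal X_\Pi$, why a general point of $B_\Pi\cap H$ is a general point of $B_\Pi$ and why the lines of $X_\Pi$ contained in $H$ (the locus discussed in Remark \ref{rem:sec}) cause no trouble; the paper leaves these points implicit.
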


\begin{proof} Let $\pi$ be a general hyperplane of $\bbP^n$ and let $B_{\Pi,\pi}$ be the section of $B_\Pi$ with $\pi$. There is a rational map 
$$
\rho_{\Pi,\pi}: X_\Pi\dasharrow B_{\Pi,\pi}
$$
that maps the general point $[\ell]\in X$ to the intersection of the line $\ell$ with $\pi$. By Proposition \ref {prop:fin}, this map is birational onto its image. 
\end{proof}

\begin{remark}\label{rem:sec} The indeterminacy locus $\mathbb R_{\Pi,\pi}$ of the birational map $\rho_{\Pi,\pi}$ is the set of \color{red} points \color{black}  in $X_\Pi$ corresponding to lines contained in $\pi$. This is the intersection of the Grassmannian of lines in $\pi$, that  is a $\mathbb G(1,n-1)$, with $\Pi$. So  
$$
\dim ( \mathbb R_{\Pi,\pi})=n-5, \,\,\, \deg(\mathbb R_{\Pi,\pi})=\deg (\mathbb G(1,n-1))=\frac {(2n-4)!}{(n-1)!(n-2)!},
$$
and $\mathbb R_{\Pi,\pi}$ spans a linear space of dimension 
$$
\frac {n(n-1)}2-1-(n+1)=\frac {n(n-3)} 2-2. 
$$
In particular for $n=5$, $\mathbb R_{\Pi,\pi}$ consists of 5 points spanning a $\bbP^3$. 

Now we claim that the map  $\rho_{\Pi,\pi}$ is linear in the Pl\"ucker coordinates of $X_\Pi$. Indeed, we may assume that the hyperplane $\pi$ has equation $x_0=0$. If $\ell$ is a general line of $X_\Pi$, we may assume that it has Pl\"ucker coordinates given by the order two determinants of a matrix of the form
$$
\left ( 
\begin{matrix}
1&0&a_2&a_3&\ldots &a_n\\
1&b_1&0&b_3&\ldots &b_n\\
\end{matrix}
\right)
$$
so that these  Pl\"ucker coordinates are 
$$
[b_1,-a_2, b_3-a_3, \ldots, b_n-a_n, \ldots].
$$
The coordinates of the intersection point of the line $\ell$ with $\pi$ are
$$
[b_1, -a_2, b_3-a_3,\ldots, b_n-a_n] 
$$
and this proves our claim.

In conclusion $B_{\Pi,\pi}\subset \bbP^{n-1}$ turns out to be the projection of $X_\Pi$ from the span of $\mathbb R_{\Pi,\pi}$. This fits  with the dimensions of the ambient spaces. Indeed
$$
\dim(\Pi)-\dim (\langle \mathbb R_{\Pi,\pi}\rangle)-1=\Big ( \frac {(n+1)(n-2)} 2-1 \Big)-\Big (\frac {n(n-3)} 2-2\Big)-1=n-1. 
$$

For example, for $n=5$, $B_{\Pi,\pi}\subset \bbP^4$ is a surface of degree $9=14-5$, birational projection of the K3 surface $X_\Pi$ of degree 14 in $\bbP^ 8$ from a 5--secant $\bbP^3$ to $X_\Pi$, and $B_{\Pi,\pi}$ contains the 5 lines that are the exceptional divisors corresponding to the points from which we project. 
\end{remark}

\subsection{The locus of base lines and the focal locus} \label{ssec:basfoc}

In the case $n=5$ we verified that $B_\Pi$ and $Z_\Pi$ have the same degree 9. This is not by chance. Indeed, one has:

\begin{theorem}\label{thm:main} The locus of base lines $B_\Pi$ coincides with the focal locus $Z_\Pi$.
\end{theorem}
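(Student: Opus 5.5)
We prove the inclusion $B_\Pi\subseteq Z_\Pi$ and then conclude by comparing dimensions and irreducibility. By Proposition \ref{prop:fin}, $B_\Pi$ is irreducible of dimension $n-2$, being the image of the irreducible universal family $\mathcal X_\Pi$. By Theorem \ref{thm:prec}, $Z_\Pi$ has pure dimension $n-2$, since it is a degeneracy locus of the expected codimension. So once $B_\Pi\subseteq Z_\Pi$ is known, $B_\Pi$ is an irreducible component of $Z_\Pi$. To get equality we must also show $Z_\Pi$ is irreducible, or compare degrees. Our plan is to prove that $\deg(B_\Pi)={n\choose 2}-1$ as well. Then $Z_\Pi$, which has the same degree, can have no other component, at least set-theoretically, assuming the degeneracy scheme is generically reduced along $B_\Pi$.

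\emph{Step 1: $B_\Pi\subseteq Z_\Pi$.} By Lemma \ref{lem:inf}, it suffices to show that through a general point $p$ of a base line $\ell$ pass infinitely many Palatini lines. We use the same coordinates as in the proof of Theorem \ref{thm:basic}, with $p=[1,0,\ldots,0]$. A complex of $\mathcal L_\Pi$ has $p$ in its centre iff $a_{01}=\cdots=a_{0n}=0$, that is, iff $h(p,\cdot)\equiv 0$. Equivalently, the $n+1$ linear forms $h_i(p,\cdot)$ on $V/\langle p\rangle$, for $i=0,\ldots,n$, are linearly dependent. These are $n+1$ vectors in a space of dimension $n$. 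Since $\ell=\langle p,q\rangle$ is a base line, $h_i(p,q)=0$ for all $i$, so all these forms vanish on $q$. They therefore lie in a space of dimension $n-1$, and the solution space has dimension at least $2$. Hence there is at least a $\bbP^1$ of complexes in $\mathcal L_\Pi$ whose centre contains $p$. This pencil meets $\mathcal C_\Pi$ entirely, since every element is singular. Lemma \ref{lem:gen} keeps its members, generically, away from the indeterminacy issues. Their centres form infinitely many Palatini lines through $p$. There is one subtlety: if all members of the pencil had the same centre line, we would only get one line. We rule this out as follows. A fixed line $m$ being the centre of a pencil of complexes means the pencil lies in the fibre of $p_\Pi$ over $[m]$, contradicting the finiteness in Remark \ref{rem:ram1}. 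Thus $p\in Z_\Pi$, and $B_\Pi\subseteq Z_\Pi$.

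\emph{Step 2: degree of $B_\Pi$.} We compute $\deg(B_\Pi)$ as the number of base lines meeting a general codimension $n-2$ linear space $\Lambda\cong\bbP^2$. This is the intersection number $\sigma_{n-2}\cdot H^{n-3}$ on $\mathbb G(1,n)$, where $\sigma_{n-2}$ is the Schubert class of lines meeting a plane, times $H^{n+1}$ from $\Pi$. So we need $\sigma_{n-2}\cdot\sigma_1^{2n-4}$ on $\mathbb G(1,n)$. By Pieri, and Proposition \ref{prop:fin}, this number equals $\deg(B_\Pi)$. Using Pieri, $\sigma_{n-2}\sigma_1^{2n-4}$ reduces to counting lattice paths, and we expect it to equal ${n\choose 2}-1$. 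Indeed, $\sigma_{n-2}\sigma_1$ gives $\sigma_{n-1}+\sigma_{n-2,1}$. The term $\sigma_{n-1}\sigma_1^{2n-5}$ then contributes $n-1$, and the other term contributes the remainder, i.e.\ the number of standard-tableau-like paths. As a check, for $n=5$ this gives $9$, in agreement with the computation in Remark \ref{rem:sec}.

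\emph{Main obstacle.} The hard step is the final passage from $B_\Pi\subseteq Z_\Pi$ and equal degrees to equality. This requires knowing that the Porteous degeneracy scheme of $\alpha$ is reduced along $B_\Pi$. Otherwise $B_\Pi$ would count with multiplicity, and other components would be excluded only by the degree inequality. We would first try to verify reducedness at one general point of $B_\Pi$ by a local computation of the corank of $\alpha$. By Step 1 the kernel there is exactly $2$-dimensional, which suggests multiplicity one. Alternatively, we would show directly that a general point of any component of $Z_\Pi$ lies on a base line. The idea is that a positive-dimensional family of Palatini lines through $p$ forces the forms $h_i(p,\cdot)$ to drop rank, with a common kernel vector $q$; since $h_i(p,q)=0$ for all $i$, the line $\langle p,q\rangle$ is then a base line. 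This converse argument would settle the theorem without any multiplicity analysis.
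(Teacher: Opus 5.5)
Your closing ``alternative'' paragraph is in fact a complete proof. It is more elementary than the paper's. The route you present as primary is unnecessary and, as written, contains errors.

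\textbf{The complete argument.} Suppose $p\in Z_\Pi$. Lemma \ref{lem:inf} gives infinitely many Palatini lines through $p$, hence infinitely many complexes of $\mathcal L_\Pi$ with $p$ in their centre. These complexes form the projectivized kernel of the map $\lambda\mapsto\sum_i\lambda_i h_i(p,\cdot)\in (V/\langle p\rangle)^*$, so that kernel has dimension at least $2$. Hence the $n+1$ forms $h_i(p,\cdot)$ span at most $n-1$ dimensions of the $n$-dimensional space $(V/\langle p\rangle)^*$. They therefore have a common zero $q\notin\langle p\rangle$, and $\langle p,q\rangle$ is a base line. So $Z_\Pi\subseteq B_\Pi$. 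Since $\dim(Z_\Pi)=n-2$ (Theorem \ref{thm:prec}) and $B_\Pi$ is irreducible of dimension $n-2$, equality follows. Neither Step 1 nor Step 2 is needed. Equivalently, $\alpha_p$ fails to be injective exactly when some $u\wedge v$ is killed by all $h_i$, i.e.\ when a base line passes through $p$; so the degeneracy locus of $\alpha$ is set-theoretically $B_\Pi$.

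\textbf{Comparison with the paper.} The paper argues from the side of the Palatini lines. Lemma \ref{lem:prel} cuts the cone over ${\rm Seg}(1,n-2)$ with $\Pi\cap T_{\mathbb G(1,n),\ell}$ to show that a general Palatini line $\ell$ meets exactly $n-1$ base lines. Since the focal scheme on $\ell$ has length $n-1$, these points are all the foci. That route costs a Segre-cone intersection count, but it gives more: the focal scheme of a general Palatini line consists precisely of its $n-1$ intersection points with base lines.

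\textbf{Errors in the rest of the proposal.}

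\emph{Step 2.} The lines meeting a plane of $\bbP^n$ have class $\sigma_{n-3}$, not $\sigma_{n-2}$ (which is the class of lines meeting a line). The number to compute is $\int\sigma_{n-3}\sigma_1^{n+1}$. Your product $\sigma_{n-2}\sigma_1^{2n-4}$ has codimension $3n-6>2n-2$ and so vanishes; your $n=5$ ``check'' therefore checks nothing. Also $\int\sigma_{n-1}\sigma_1^{n-1}=1$, not $n-1$. The correct count, by the reflection principle for lattice paths, is $\binom{n+1}2-(n+1)=\binom n2-1$. So the degree claim is true, but what you wrote does not establish it.

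\emph{The ``main obstacle''.} It is illusory. For a degeneracy locus of the expected codimension, the Porteous class is $\sum_i\mu_i[Z_i]$ with all $\mu_i\geq 1$. So if $B_\Pi$ is a component with $\deg(B_\Pi)=\deg(Z_\Pi)$, it has multiplicity one and there are no other components. No reducedness check is needed.

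\emph{Step 1.} The appeal to Remark \ref{rem:ram1} does not make sense: the pencil lies in $\mathcal L_\Pi$, and $[m]$ is in the source of $p_\Pi$, not its target. A pencil with fixed centre $m$ means that $\mathcal L_\Pi$ contains a pencil of hyperplanes through $T_{\mathbb G(1,n),[m]}$. That is a condition of codimension $2n$ on $\Pi$, while $m$ has only $2n-2$ parameters, so it never occurs for general $\Pi$. Likewise, Lemma \ref{lem:gen} is not what keeps the pencil inside $U_\Pi$. What does is that the centres of the points of ${\rm Sing}(\mathcal C_\Pi)$ sweep out a set of dimension at most $n-3$, which a general point of $B_\Pi$ avoids.
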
 

To prove this, we need the following preliminary result:

\begin{lemma}\label{lem:prel} The general Palatini line $\ell$ intersects exactly $n-1$ base lines. 
\end{lemma}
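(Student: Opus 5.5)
We need to show the general Palatini line $\ell$, centre of a complex $\omega\in U_\Pi$, meets exactly $n-1$ base lines. The plan is to translate "base line meeting $\ell$" into a linear-algebra condition and then count with the tangent-space picture already used in the proof of Proposition \ref{prop:fin}.

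\textbf{Step 1 (reduction to a linear section of $\mathbb I_\ell$).} As in the proof of Proposition \ref{prop:fin}, the lines meeting $\ell$ are parametrized by $\mathbb I_\ell=\mathbb G(1,n)\cap T_{\mathbb G(1,n),\ell}$. This is the cone with vertex $[\ell]$ over ${\rm Seg}(1,n-2)\subset\bbP^{2n-3}$. The base lines meeting $\ell$ therefore correspond to $\mathbb I_\ell\cap\Pi=\mathbb I_\ell\cap(\Pi\cap T_{\mathbb G(1,n),\ell})$. By Lemma \ref{lem:gen}, $[\ell]\notin X_\Pi$, so the vertex is not in $\Pi$.

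\textbf{Step 2 (dimension count).} By Remark \ref{rem:ram1}, since $[\ell]\in R_\Pi$ is general, $T_{\mathbb G(1,n),\ell}$ and $\Pi$ span only a hyperplane. Hence
$$\dim(\Pi\cap T_{\mathbb G(1,n),\ell})=2n-1-c=n-2,$$
rather than the expected $n-3$. Project from the vertex $[\ell]$ to $\bbP^{2n-3}$. Since $[\ell]\notin\Pi$, the projection maps $\Pi\cap T_{\mathbb G(1,n),\ell}$ isomorphically onto a linear space $\Lambda\cong\bbP^{n-2}$. Moreover, points of $\mathbb I_\ell\cap\Pi$ correspond bijectively to points of $\Lambda\cap{\rm Seg}(1,n-2)$: each ruling line of the cone through a Segre point meets $\Pi$ in exactly one point, because the ruling is not contained in $\Pi$ (its vertex is not in $\Pi$). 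Now ${\rm Seg}(1,n-2)$ has dimension $n-1$ and degree $n-1$ in $\bbP^{2n-3}$, and $\Lambda$ has complementary codimension $n-1$. So if the intersection is finite and transverse, it consists of exactly $n-1$ points, giving exactly $n-1$ base lines meeting $\ell$. (Consistency check: these lines cut $\ell$ in $n-1$ points, matching the $n-1$ foci expected on $\ell$.)

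\textbf{Step 3 (main obstacle: transversality).} The delicate point is showing that $\Lambda$ meets the Segre variety in finitely many reduced points, since $\Lambda$ is not a general linear space: it is forced by the tangency condition. I would argue by a parameter count on the incidence variety of pairs $(\Pi,[\ell])$ with $[\ell]\in R_\Pi$. Fix $[\ell]$ and a hyperplane $H\supset T_{\mathbb G(1,n),\ell}$ (which is a singular complex with centre $\ell$). Then $\Pi$ can be taken to be a general codimension-$n$ subspace of $H$, subject to the requirement that $\Pi\cap T_{\mathbb G(1,n),\ell}$ has dimension $n-2$. Under this description $\Lambda$ is a general $(n-2)$-plane in $\bbP^{2n-3}$. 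Indeed, the set of such $\Pi$ surjects, via intersection with $T_{\mathbb G(1,n),\ell}$ and projection, onto the Grassmannian of $(n-2)$-planes avoiding the vertex, and the general $\Pi$ together with a general point of $R_\Pi$ is the general point of this incidence variety. Bertini/Kleiman then gives a transverse intersection of $n-1$ points, and Lemma \ref{lem:gen} rules out any contributions at the vertex. I expect the verification that the incidence variety dominates the Grassmannian of $\Lambda$'s to be the only non-routine part.
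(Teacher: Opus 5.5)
Your proposal is correct and follows the paper's proof: both use Remark \ref{rem:ram1} to get $\dim(\Pi\cap T_{\mathbb G(1,n),\ell})=n-2$, then intersect this space with the cone $\mathbb I_\ell$ over ${\rm Seg}(1,n-2)$, which has degree $n-1$, and conclude by transversality. The only difference is that the paper simply asserts, "by the generality of $\Pi$", that this linear space is general in $T_{\mathbb G(1,n),\ell}$. Your incidence-variety argument in Step 3 actually justifies that claim: the incidence variety is an irreducible tower of Grassmannian bundles over $\mathbb G(1,n)$ that dominates the space of $\Pi$'s, and $\mathcal C_\Pi$ is irreducible, so the general point of the incidence variety is a general Palatini line of a general $\Pi$.
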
 

\begin{proof} The proof is similar to the one of Proposition \ref {prop:fin}. In fact, let $\ell$ be a general Palatini line. Then, recalling Remark  \ref {rem:ram1}, $T_{\mathbb G(1,n),\ell}$ and $\Pi$ span a hyperplane, hence they intersect in a linear space of dimension $n-2$. The set of lines $\mathbb I_\ell$ incident to $\ell$ 
is contained in $T_{\mathbb G(1,n),\ell}$ and it is 
the cone, with vertex $[\ell]\in T_{\mathbb G(1,n),\ell}$ over the already mentioned Segre variety ${\rm Seg}(1,n-2)$ in $\bbP^{2n-3}$, so it has dimension $n$ and codimension $n-2$ in $T_{\mathbb G(1,n),\ell}$. By the generality of $\Pi$, the linear space $\Pi\cap T_{\mathbb G(1,n),\ell}$ is a  general subspace  of codimension $n-2$ in $T_{\mathbb G(1,n),\ell}$ and therefore it intersects transversely $\mathbb I_\ell$ in as many distinct points as its degree, that is $n-1$. These points correspond to just $n-1$ lines in $X_\Pi$ that are incident to $\ell$.
\end{proof}

\begin{proof}[Proof of Theorem \ref {thm:main}]  Lemma \ref {lem:prel}  implies that the Palatini lines are $(n-1)$--secant to $B_\Pi$. This yields that if $\ell$ is a general Palatini line, its $n-1$ intersection points with $B_\Pi$ are foci for the family of Palatini lines on $\ell$, hence $B_\Pi$  is  the locus of foci of the Palatini family and therefore it coincides $Z_\Pi$. 
\end{proof}

\subsection {Birational parametrization of the general Pfaffian hypersurface of degree $k+1$ in $\bbP^{2k+1}$}\label{ssec:rat}

Let us go back to the map $\alpha$ defined in \eqref {eq:alpha}. By Theorem \ref {thm:main} we have an exact sequence
\begin{equation}\label{eq:ex}
0\longrightarrow T_{\bbP^n}(-2) \stackrel{\alpha} \longrightarrow \mathcal O_{\bbP^n}^{n+1}\longrightarrow \mathcal I_{Z_\Pi|\bbP^n}(n-1) \cong \mathcal I_{B_\Pi|\bbP^n}(n-1) \longrightarrow 0.
\end{equation}
Since $h^0(\bbP^n, T_{\bbP^n}(-2))=h^1(\bbP^n, T_{\bbP^n}(-2))=0$, we have $h^0(\bbP^n, \mathcal I_{B_\Pi|\bbP^n}(n-1))=n+1$. 

\begin{lemma}\label{lem:restr} Let $\pi\cong \bbP^{n-1}$ be a general hyperplane of $\bbP^n$.  Then 
$h^0(\bbP^n, \mathcal I_{B_{\Pi,\pi}|\bbP^{n-1}}(n-1))=n+1$.
\end{lemma}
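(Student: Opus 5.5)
Restrict the exact sequence \eqref{eq:ex} to the general hyperplane $\pi\cong\bbP^{n-1}$ and read off cohomology. Since $\pi$ is general, it is transversal to $B_\Pi=Z_\Pi$. The sheaf $\mathcal I_{B_\Pi|\bbP^n}$ has a locally free resolution of length one, namely \eqref{eq:ex} twisted by $\mathcal O(-(n-1))$. Therefore $\mathcal{T}or_1(\mathcal I_{B_\Pi|\bbP^n},\mathcal O_\pi)=0$ for $\pi$ general, and tensoring \eqref{eq:ex} with $\mathcal O_\pi$ gives an exact sequence
$$
0\longrightarrow T_{\bbP^n}(-2)|_\pi \stackrel{\alpha|_\pi}\longrightarrow \mathcal O_{\pi}^{n+1}\longrightarrow \mathcal I_{B_{\Pi,\pi}|\pi}(n-1) \longrightarrow 0.
$$
The injectivity of $\alpha|_\pi$ also follows directly from the fact that $\alpha$ has maximal rank off $Z_\Pi$, and $\pi\not\subset Z_\Pi$. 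The identification of the cokernel with $\mathcal I_{B_{\Pi,\pi}|\pi}(n-1)$ holds because $\pi$ is general: the restriction of $\mathcal I_{B_\Pi}$ is the ideal of the scheme-theoretic intersection $B_\Pi\cap\pi$, since $\pi$ contains no associated point of $\mathcal O_{B_\Pi}$.

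Taking cohomology, it suffices to prove
$$
h^0(\pi, T_{\bbP^n}(-2)|_\pi)=h^1(\pi,T_{\bbP^n}(-2)|_\pi)=0.
$$
Then $H^0(\mathcal O_\pi^{n+1})\cong H^0(\mathcal I_{B_{\Pi,\pi}|\pi}(n-1))$, which has dimension $n+1$.

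For the vanishing, use the splitting $T_{\bbP^n}|_\pi\cong T_\pi\oplus \mathcal O_\pi(1)$, which comes from the normal bundle sequence $0\to T_\pi\to T_{\bbP^n}|_\pi\to \mathcal O_\pi(1)\to 0$. This sequence splits because $\mathrm{Ext}^1(\mathcal O_\pi(1),T_\pi)=H^1(T_\pi(-1))=0$, as one sees from the Euler sequence on $\pi$. Hence
$$
T_{\bbP^n}(-2)|_\pi\cong T_\pi(-2)\oplus\mathcal O_\pi(-1).
$$
Clearly $h^0=h^1=0$ for $\mathcal O_\pi(-1)$ on $\bbP^{n-1}$ with $n-1\geq 4$. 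For $T_\pi(-2)$, the Euler sequence $0\to\mathcal O_\pi(-2)\to\mathcal O_\pi(-1)^{n}\to T_\pi(-2)\to0$ gives
$$
H^0(T_\pi(-2))\hookrightarrow H^1(\mathcal O_\pi(-2))=0,\qquad H^1(T_\pi(-2))\hookrightarrow H^2(\mathcal O_\pi(-2))=0,
$$
both because $\dim\pi=n-1\geq 4$. As an alternative, one could restrict the Euler sequence of $\bbP^n$ to $\pi$ directly and use the same vanishings.

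The main obstacle is justifying that the restricted sequence stays exact and that its cokernel is exactly the twisted ideal sheaf of $B_{\Pi,\pi}$ in $\pi$, not merely a sheaf agreeing with it off a small set. The $\mathcal{T}or$-vanishing for general $\pi$, via a generic flatness or Bertini-type argument, handles this. Moreover, $B_\Pi$ is determinantal of the expected codimension $2$, hence Cohen--Macaulay by Eagon--Northcott, so it has no embedded components and $B_{\Pi,\pi}$ is again a codimension-$2$ determinantal subscheme of $\pi$ defined by the $n\times n$ minors of $\alpha|_\pi$.
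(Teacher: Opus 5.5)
Your argument is correct, but it is organized differently from the paper's. The paper stays on $\bbP^n$. It twists \eqref{eq:ex} by $\mathcal O_{\bbP^n}(-1)$ and uses $h^1(T_{\bbP^n}(-3))=h^2(T_{\bbP^n}(-3))=0$ to get $h^0(\mathcal I_{B_\Pi|\bbP^n}(n-2))=h^1(\mathcal I_{B_\Pi|\bbP^n}(n-2))=0$. The restriction sequence $0\to\mathcal I_{B_\Pi}(n-2)\to\mathcal I_{B_\Pi}(n-1)\to\mathcal I_{B_{\Pi,\pi}|\pi}(n-1)\to 0$, together with the already established $h^0(\mathcal I_{B_\Pi}(n-1))=n+1$, then shows that restriction of sections is an isomorphism.

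You instead restrict the resolution itself to $\pi$. You then need only the vanishing of $h^0$ and $h^1$ of $T_{\bbP^n}(-2)|_\pi$, so the whole computation takes place on $\pi$. As a bonus you get a resolution of $\mathcal I_{B_{\Pi,\pi}|\pi}(n-1)$, i.e.\ $\mathcal B_{\Pi,\pi}$ is spanned by the maximal minors of $\alpha|_\pi$. Comparing with the unrestricted sequence, via $H^0(\mathcal O_{\bbP^n}^{n+1})\cong H^0(\mathcal O_\pi^{n+1})$, also recovers the paper's isomorphism on sections, which is what underlies the later remark that $\varphi_{\Pi,\pi}$ is the restriction of $\varphi_\Pi$.

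Both routes rest on the same input: $\pi$ contains no associated point of $\mathcal O_{B_\Pi}$, so that $\mathcal I_{B_\Pi}\otimes\mathcal O_\pi$ is the ideal of $B_\Pi\cap\pi$ in $\pi$. The paper uses this silently in its restriction sequence; you state it explicitly, which is a plus.

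Two small corrections. First, a length-one locally free resolution only kills $\mathcal{T}or_i$ for $i\geq 2$. The vanishing of $\mathcal{T}or_1(\mathcal I_{B_\Pi},\mathcal O_\pi)$ comes instead from torsion-freeness of the ideal sheaf (so it holds for every hyperplane), or equivalently from the injectivity of $\alpha|_\pi$, which you also give. Second, the splitting $T_{\bbP^n}|_\pi\cong T_\pi\oplus\mathcal O_\pi(1)$ is unnecessary: the restricted Euler sequence $0\to\mathcal O_\pi(-2)\to\mathcal O_\pi(-1)^{n+1}\to T_{\bbP^n}(-2)|_\pi\to 0$ gives both vanishings directly.
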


\begin{proof} First notice that $h^0(\bbP^n, \mathcal I_{B_\Pi|\bbP^n}(n-2))=0$. This follows from the exact sequence \eqref {eq:ex} twisted by $\mathcal O_{\bbP^n}(-1)$, noting that $h^1(\bbP^n, T_{\bbP^n}(-3))=0$.

From the exact sequence \eqref {eq:ex} twisted by $\mathcal O_{\bbP^n}(-1)$ we also deduce  that 
$h^1(\bbP^n, \mathcal I_{B_\Pi|\bbP^n}(n-2))=0$, because $h^2(\bbP^n, T_{\bbP^n}(-3))=0$. 

Then from the exact sequence
$$
0\longrightarrow  \mathcal I_{B_\Pi|\bbP^n}(n-2) \longrightarrow  \mathcal I_{B_\Pi|\bbP^n}(n-1)\longrightarrow  \mathcal I_{B_{\Pi,\pi}|\bbP^{n-1}}(n-1) \longrightarrow 0
$$
we deduce that 
$h^0(\bbP^{n-1}, \mathcal I_{B_{\Pi,\pi}|\bbP^{n-1}}(n-1))=h^0(\bbP^n, \mathcal I_{B_\Pi|\bbP^n}(n-1))=n+1$, as wanted.\end{proof}

Consider the  map
$$
\varphi_{\Pi,\pi}: \pi\cong \bbP^{n-1}\dasharrow  \bbP^n
$$ 
that is defined by the linear system  $\mathcal B_{\Pi,\pi}:=|\mathcal I_{B_{\Pi,\pi}|\bbP^{n-1}}(n-1)|$. 

\begin{theorem}\label{thm:inv} The map $\varphi_{\Pi,\pi}$ is birational onto its image that is the Pfaffian hypersurface $\mathcal C_\Pi$.
\end{theorem}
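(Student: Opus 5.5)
The plan is to show that $\varphi_{\Pi,\pi}$ is, up to a linear identification of the target $\bbP^n$ with $\mathcal L_\Pi\cong\Pi^\perp$, the inverse of the birational map $U_\Pi\dasharrow\pi$ of Theorem \ref{thm:rat}. That is, $\varphi_{\Pi,\pi}(p)$ should be the unique singular complex in $\mathcal L_\Pi$ whose centre passes through $p$.

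First I make the linear system explicit. By the exact sequence \eqref{eq:ex}, the $n+1$ sections of $\mathcal I_{B_\Pi|\bbP^n}(n-1)$ are the maximal minors of the $(n+1)\times n$ matrix representing $\alpha$ in a local frame. Concretely, at a point $p=[\mathbf p]$ the fibre of $T_{\bbP^n}(-2)$ is spanned by the $\mathbf p\wedge\mathbf e_j$. The map $\alpha$ sends $\mathbf p\wedge\mathbf v$ to the vector $(h_0(\mathbf p,\mathbf v),\dots,h_n(\mathbf p,\mathbf v))$. Hence the cokernel map $\mathcal O^{n+1}\to\mathcal I_{B_\Pi}(n-1)$, evaluated at $p$, is the linear form on $\mathbb C^{n+1}$ (the parameters $\lambda$ of $\mathcal L_\Pi$) vanishing on the image of $\alpha_p$. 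Dually, the morphism $\bbP^n\setminus B_\Pi\to\bbP^n$ given by $|\mathcal I_{B_\Pi}(n-1)|$ sends $p$ to the point $[\lambda]$ spanning the kernel of $\alpha_p^t$. This kernel is exactly the solution of the system \eqref{eq:lop}: the $\lambda$ with $\sum\lambda_i h_i(\mathbf p,\cdot)=0$, i.e.\ the complex in $\mathcal L_\Pi$ whose centre contains $p$. So on $\bbP^n\setminus B_\Pi$ the map defined by $|\mathcal I_{B_\Pi}(n-1)|$ is $p\mapsto\omega_p$. Its image lies in $\mathcal C_\Pi$ and it is constant along Palatini lines.

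Next I restrict to $\pi$. By Lemma \ref{lem:restr}, restriction gives an isomorphism $H^0(\mathcal I_{B_\Pi}(n-1))\cong H^0(\mathcal I_{B_{\Pi,\pi}|\bbP^{n-1}}(n-1))$, since the $h^0$ of the kernel twist vanishes and the dimensions agree. Therefore $\varphi_{\Pi,\pi}$ is the restriction to $\pi$ of the map above, namely $p\mapsto\omega_p$ for $p\in\pi$ general. Its image is contained in $\mathcal C_\Pi$.

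Finally I prove birationality. By Theorem \ref{thm:basic}, a general point $\omega\in U_\Pi$ has centre a line $\ell_\omega$ meeting the general hyperplane $\pi$ in one point $p$, and $\varphi_{\Pi,\pi}(p)=\omega$. This gives dominance onto $\mathcal C_\Pi$, which is irreducible of dimension $n-1=\dim\pi$. Conversely, if $\varphi_{\Pi,\pi}(p)=\varphi_{\Pi,\pi}(p')=\omega$ with $p,p'$ general in $\pi$, then both points lie on the unique Palatini line through $p$, namely $\ell_\omega$. Since $\ell_\omega\not\subset\pi$, we get $p=p'$. Hence $\varphi_{\Pi,\pi}$ is the inverse of the map $U_\Pi\dasharrow\pi$ of Theorem \ref{thm:rat}.

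The main obstacle is the first step: identifying the cokernel map in \eqref{eq:ex} with the kernel of $\alpha_p^t$, so that the maximal minors of $\alpha$ really are the coordinates of $\omega_p$. This is Cramer's rule, and I would check it by writing $\alpha$ in the affine chart $x_0=1$ with frame $\mathbf e_0\wedge\mathbf e_j$.
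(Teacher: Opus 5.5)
Your proof is correct, but it runs differently from the paper's. The paper argues indirectly. The base locus $B_{\Pi,\pi}$ of $\mathcal B_{\Pi,\pi}$ is the set of points of $\pi$ lying on infinitely many Palatini lines, so it is also the indeterminacy locus of the inverse of the birational map $U_\Pi\dasharrow\pi$ of Theorem~\ref{thm:rat}. The paper then asserts that this inverse is given by a system $|\mathcal I_{B_{\Pi,\pi}|\bbP^{n-1}}(h)|$ and reads off $h=n-1$ from \eqref{eq:ex} and Lemma~\ref{lem:restr}.

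You instead compute the map outright, in three steps:
\begin{enumerate}
\item Off $B_\Pi$, $\varphi_\Pi(p)$ is the unique complex $\omega_p\in\mathcal L_\Pi$ whose centre contains $p$.
\item Lemma~\ref{lem:restr} (restriction is injective and the dimensions agree) gives $\varphi_{\Pi,\pi}=\varphi_\Pi|_\pi$.
\item Since $\varphi_{\Pi,\pi}(\ell_\omega\cap\pi)=\omega$, the map $\varphi_{\Pi,\pi}$ is the inverse of the map of Theorem~\ref{thm:rat}.
\end{enumerate}

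What your route buys is a real justification of the step the paper leaves implicit. That step is that the inverse is given by the complete system of forms of degree exactly $n-1$ through $B_{\Pi,\pi}$. Agreement of indeterminacy loci as sets does not by itself pin down either the linear system or its degree. As a byproduct, you also prove the remark after the theorem that $\varphi_\Pi$ contracts the Palatini lines onto $\mathcal C_\Pi$.

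The step you flag as the main obstacle is easier than you expect. Take $p\notin B_\Pi$ and work in a local trivialization of $\mathcal I_{B_\Pi|\bbP^n}(n-1)$ near $p$. Exactness of \eqref{eq:ex} says the cokernel map at $p$ is a nonzero functional on $\mathbb C^{n+1}$ with kernel $\mathrm{im}\,\alpha_p$. Its coordinates are the values $s_i(p)$ of the sections $s_i$ coming from the standard basis. Hence $\sum_i s_i(p)\,h_i(\mathbf p,\mathbf v)=0$ for all $\mathbf v$, that is, $p$ lies in the centre of $\sum_i s_i(p)H_i$. No explicit minors or Cramer's rule are needed, and the argument does not depend on which surjection appears in \eqref{eq:ex}.
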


\begin{proof}
The indeterminacy locus of $\varphi_{\Pi,\pi}$, namely  $B_{\Pi,\pi}$ is just the set of points in $\pi$ through which there pass infinitely many Palatini lines. Hence it coincides with the indeterminacy locus of the inverse of the birational map $\mathcal C_\Pi\dasharrow \pi$. This inverse map is therefore defined by a linear system of the form $|\mathcal I_{B_{\Pi,\pi}|\bbP^{n-1}}(h)|$, with $h$ a positive integer. From the above considerations, it is clear that $h=n-1$ and the assertion follows. 
\end{proof}

Notice that $\varphi_{\Pi,\pi}$ is the restriction to $\pi$ of the rational map
$$
\varphi_{\Pi}: \bbP^{n}\dasharrow  \bbP^n
$$ 
that is defined by the linear system  $\mathcal B_{\Pi}:=|\mathcal I_{B_{\Pi}|\bbP^{n}}(n-1)|$. This map contracts to points the Palatini lines and its image is again the Pfaffian hypersurface $\mathcal C_\Pi$.

\begin{remark}\label{rem:scroll} Given a base line $\ell$ corresponding to a point of $X_\Pi$, we can consider the set $\Sigma_\ell$ of all points $\omega\in \mathcal C_\Pi$ such that the centre of $\omega$ intersects $\ell$. One checks that $\Sigma_\ell$ is a rational normal scroll of degree $n-1$ in $\mathcal C_\Pi$. So  the Pfaffian hypersurface $\mathcal C_\Pi$ contains a $(n-3)$--dimensional family of such scrolls, parametrised by the Calabi--Yau variety $X_\Pi$. The first incarnation of this phenomenon is the 2 dimensional family of quartic scrolls contained in a Pfaffian cubic fourfold (see \cite{BD,BRS,K}), that are parametrized by the ``orthogonal'' degree 14 K3 surface, but this phenomenon appears in much greater generality. While the appearence of the K3 surfaces in the intermediate cohomology of a rational cubic fourfold is nowadays considered normal, in the higher dimensional cases this construction seems to point to a ``CY part'' inside the cohomology of Pfaffian rational hypersurfaces of degree $k+1$, given by these families of scrolls.
\end{remark}

\end{document}